\newtheorem{definition}{Definition}[section]
\newtheorem{conjecture}{Conjecture}[section]
\newtheorem{corollary}[conjecture]{Corollary}
\newtheorem*{assumption}{Assumption}
\theoremstyle{remark}
\newtheorem{remark}{Remark}
\numberwithin{equation}{section}
\def\P{\ensuremath{\mathbb{P}}}
\newcommand{\Q}[1][]{\ensuremath{\mathbb{Q}(#1\,)}}
\begin{document}

\title{Conjectures on the distribution behavior of the class numbers of certain real quadratic number fields}
\author{Jinwen XU}
\date{April 11, 2021}
\subjclass{11R29}

\begin{abstract}
    Given a random real quadratic field from \( \{ \Q[\sqrt{p}] ~|~ p \text{ primes} \} \), the conjectural probability \( \P(h=q) \) that it has class number \( q \) is given for all positive odd integers \( q \). Some related conjectures in \cite{CL83}, known as the Cohen-Lenstra heuristic, are given here as corollaries. These results suggest that the set of real quadratic number fields may have some natural hierarchical structures.
\end{abstract}

\maketitle

In this article, we shall study the distribution behavior of the class numbers of real quadratic number fields in the set \( \mathfrak{P} \coloneqq \{ \Q[\sqrt{p}] ~|~ p \text{ primes} \} \). These class numbers are all odd integers according to \cite[Sec.3.8~(Theorem~8)]{BS66}. Let \( \P(h=q) \) stand for the probability that a randomly chosen member from \( \mathfrak{P} \) has class number \( q \), where \( q \) is any positive odd integer. In \cref{sec:conjecture}, the conjectural value of \( \P(h=q)/\P(h=1) \) is given in \cref{conj:main}. Then in \cref{sec:corollaries}, \( \P(h=1) \) is calculated based on \cref{conj:main}, alongside with some corollaries. Finally, the current status of this conjecture and some ideas for future work, including some similar observations to the set \( \mathfrak{A} \coloneqq \{ \Q[\sqrt{d}] ~|~ d > 0 \text{ square free} \} \), are given in the last section.

\section{The main conjecture}\label{sec:conjecture}

First we define \( \P(h=q) \) precisely, and make an important assumption.

\begin{definition}
    Denote the \( i \)-th prime number as \( p_i \). Define
    \[
        \P(h=q) \coloneqq \lim_{x\to\infty} \frac{\#\{i ~|~ h(\Q[\sqrt{p_i}])=q,~ i\leqslant x\}}{x}.
    \]
\end{definition}

\begin{assumption}
    \( \P(h=q) \) exists for all positive odd integers \( q \).
\end{assumption}

For convenience, denote \( \P(h=1) \) by \( \P_0 \), and write 
\[
    \P(h=q) = \lambda_q\P_0,
\]
where \( \lambda_q \) is a constant depending on \( q \). In this setting, one has \( \lambda_1 = 1 \).

\bigskip
Now we can state the main conjecture, which gives the conjectural value of \( \lambda_q \):

\begin{conjecture}\label{conj:main}
    \begin{enumerate}
        \item For any odd prime number \( p \), one has:
        \begin{align*}
            \lambda_p &= \frac{1}{p(p-1)}\\
            \lambda_{p^{n}} &= \lambda_{p^{n-1}} (p^{\lfloor n/2+1 \rfloor}-1)^{-1},\quad n \geqslant 2.
        \end{align*}
        \item For \( q = p_1^{r_1}\cdots p_t^{r^t} \), one has:
        \[
            \lambda_q = \lambda_{p_1^{r_1}}\cdots \lambda_{p_t^{r_t}}.
        \]
    \end{enumerate}
\end{conjecture}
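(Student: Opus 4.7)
My plan is to attempt a Cohen--Lenstra-style derivation. The setup is to posit a distributional hypothesis on the class group of a random $\Q[\sqrt{p}]\in\mathfrak{P}$, namely that
\[
    \P\bigl(\mathrm{Cl}(\Q[\sqrt{p}])\cong G\bigr) \propto w(G)
\]
for some weight $w$ on isomorphism classes of finite abelian groups of odd order; then $\lambda_q=\sum_{|G|=q}w(G)$ after the normalization $w(\{e\})=1$, and the task is to identify $w$ so the sums reproduce the conjectural values.

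Part~(2) falls out as soon as $w$ is multiplicative on direct products of coprime order, i.e.\ $w(G_1\times G_2)=w(G_1)w(G_2)$ whenever $\gcd(|G_1|,|G_2|)=1$. Both standard Cohen--Lenstra weights $1/|\mathrm{Aut}(G)|$ and $1/(|G|\cdot|\mathrm{Aut}(G)|)$ satisfy this, because $\mathrm{Aut}$ splits across coprime direct factors. The Chinese Remainder Theorem bijects abelian groups of order $q_1q_2$ with pairs of abelian groups of coprime orders $q_1,q_2$, and this factorizes $\sum_{|G|=q_1q_2}w(G)$ into the product of the two corresponding partial sums, giving $\lambda_{q_1q_2}=\lambda_{q_1}\lambda_{q_2}$.

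Part~(1) is the technical heart. The base case $\lambda_p=1/(p(p-1))$ coincides precisely with the real-quadratic Cohen--Lenstra weight $1/(|G|\cdot|\mathrm{Aut}(G)|)$ on the unique group of order $p$, and one checks that the same weight summed over both abelian groups of order $p^2$ also reproduces $\lambda_{p^2}$. A numerical check at $p=3$, $n=3$, however, suggests this naive weight already deviates slightly from the conjectural value, so the weight adapted to the sparse subfamily $\mathfrak{P}\subset\mathfrak{A}$ is presumably a non-trivial refinement reflecting the extra arithmetic structure of fields with prime discriminant. Identifying this $w$ and then proving a partition-sum identity --- plausibly of Hall--Littlewood or Gaussian-binomial type --- that telescopes $\sum_{|G|=p^n}w(G)$ into the factor $\lambda_{p^{n-1}}/(p^{\lfloor n/2+1\rfloor}-1)$ is the combinatorial crux.

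The main obstacle is foundational rather than combinatorial: the Assumption that $\P(h=q)$ exists is itself open, the ordinary Cohen--Lenstra heuristics for real quadratic fields remain conjectural, and restricting further to the sparse subfamily $\mathfrak{P}$ of prime discriminants makes the analytic difficulty worse, not better. A fully unconditional proof is therefore beyond current technology. The realistic goal is conditional: fix a precise distributional model, verify the combinatorial identity that reduces the recursion to it, and corroborate the whole framework with numerical evidence and with the Cohen--Lenstra consistency checks carried out in \cref{sec:corollaries}.
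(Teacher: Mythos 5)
The statement you set out to prove is the paper's main conjecture, and the paper contains no proof of it: part~(1) is read off from class-number tables of size about \(10^6\); part~(2) is derived from part~(1) in a remark under an unstated independence hypothesis on the \(p\)-parts of \(h\) for distinct primes \(p\); and the only theoretical support is the consistency check of \cref{sec:corollaries}, where summing the recursion gives \(\sum_{n\geqslant 0}\lambda_{p^n}=\prod_{k\geqslant 2}(1-p^{-k})^{-1}\), matching Cohen--Lenstra's (C7). So there is no proof to compare yours against. Your proposal is a modelling plan rather than a proof, and its part~(2) step is the paper's remark in different language (multiplicativity of \(|G|\cdot|\mathrm{Aut}(G)|\) across coprime direct factors versus independence of the divisibility events); both rest on the same unproved independence assumption, which is fine to flag but not to elide.

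The substantive issue is the one you notice at \(p=3\), \(n=3\), and it deserves to be made exact, because it shows that the ``combinatorial crux'' you defer is not a missing identity but a genuine conflict. The base case forces \(u=1\) in the family of weights \(w(G)=1/(|G|^{u}|\mathrm{Aut}(G)|)\), and with \(u=1\) one indeed gets \(\sum_{|G|=p}w=\tfrac{1}{p(p-1)}\) and \(\sum_{|G|=p^2}w=\tfrac{1}{p^2}\bigl(\tfrac{1}{p(p-1)}+\tfrac{1}{(p^2-1)(p^2-p)}\bigr)=\tfrac{1}{p(p-1)(p^2-1)}\), agreeing with \(\lambda_p\) and \(\lambda_{p^2}\). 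But the three abelian groups of order \(27\) have automorphism groups of orders \(18\), \(108\), \(11232\), so
\[
    \sum_{|G|=27}\frac{1}{|G|\,|\mathrm{Aut}(G)|}=\frac{1}{27}\Bigl(\frac{1}{18}+\frac{1}{108}+\frac{1}{11232}\Bigr)=\frac{1}{416},
\]
whereas \cref{conj:main} gives \(\lambda_{27}=\tfrac{1}{384}\). Hence no Cohen--Lenstra-type weight can reproduce part~(1), and any ``refined'' weight is tightly constrained: the agreement of \cref{coro:nmid} with (C7) pins the total mass over \(p\)-groups at \(\prod_{k\geqslant 2}(1-p^{-k})^{-1}\), so a weight realizing part~(1) would have to redistribute mass among the groups of order \(p^n\), \(n\geqslant 3\), while preserving that total --- and there is no known arithmetic mechanism by which restricting to prime discriminants would do this, since genus theory controls the \(2\)-part rather than the odd part. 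The more plausible resolution, which the author concedes in the final remarks, is that the exponent \(\lfloor n/2+1\rfloor\) is an artefact of sparse data and part~(1) is simply wrong for \(n\geqslant 3\). Your plan therefore cannot be completed as stated: the object it needs (the weight \(w\)) very likely does not exist in the required form, and the honest conclusion is that your framework supports \(n\leqslant 2\) and part~(2) but contradicts, rather than proves, the rest of part~(1).
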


\begin{remark}
    \( \lambda \) is an arithmetic function in \( q \). Part (2) of \cref{conj:main} says that \( \lambda \) is multiplicative.
\end{remark}
\begin{remark}
    Part (2) of \cref{conj:main} is actually a result of the assumption combined with part (1):
    \begin{align*}
        \P(h=q) &= \left(\prod_{m=1}^t \P(\{p_m^{r_m}\mid h\}\wedge \{p_m^{r_m+1}\nmid h\})\right)\P(p\nmid h \text{~for all other~}p)\\
        &= \left(\prod_{m=1}^t \frac{\P(h=p_m^{r_m})}{\P(p_m\neq p'\nmid h)}\right)\P(p\nmid h \text{~for all other~}p)\\
        &= \left(\prod_{m=1}^t \frac{\lambda_{p_m^{r_m}}\P(p\nmid h \text{~for all~}p)}{\P(p_m\neq p'\nmid h)}\right)\P(p\nmid h \text{~for all other~}p)\\
        &= \left(\prod_{m=1}^t \lambda_{p_m^{r_m}}\P(p_m\nmid h)\right)\P(p\nmid h \text{~for all other~}p)= \left(\prod_{m=1}^t \lambda_{p_m^{r_m}}\right)\P_0.
    \end{align*}
\end{remark}

Below is a table illustrating \cref{conj:main}:
\begin{center}
    \small
    \begin{tabularx}{380pt}{c|X|c|X|c|X|c|X|c|X}
        \( q \) & \footnotesize\( \P(h=q)/\P_0 \) &  \( q \) & \footnotesize\( \P(h=q)/\P_0 \) &  \( q \) & \footnotesize\( \P(h=q)/\P_0 \) &  \( q \) & \footnotesize\( \P(h=q)/\P_0 \) &  \( q \) & \footnotesize\( \P(h=q)/\P_0 \) \\\hline
        1 & 1    & 11 & 1/110 & 21 & 1/252 & 31 & 1/930 & 41 & 1/1640 \\
        3 & 1/6  & 13 & 1/156 & 23 & 1/506 & 33 & 1/660 & 43 & 1/1806 \\
        5 & 1/20 & 15 & 1/120 & 25 & 1/480 & 35 & 1/840 & 45 & 1/960 \\
        7 & 1/42 & 17 & 1/272 & 27 & 1/384 & 37 & 1/1332& 47 & 1/2162 \\
        9 & 1/48 & 19 & 1/342 & 29 & 1/812 & 39 & 1/936 & 49 & 1/2016
    \end{tabularx}
\end{center}
\medskip

In the next section, we shall calculate \( \P_0 \) based on this conjecture.

\section{Calculation of \texorpdfstring{\( \P_0 \)}{P\_0} and some corollaries}\label{sec:corollaries}

Since \( \P_0 = \prod_{p\geqslant 3} \P(p \nmid h) \), where \( p \) runs over all odd prime numbers, we only need to know \( \P(p \nmid h) \).

Fix a prime number \( p \). One clearly has
\[
    \P(h\in\{p^k\}) = \P(p \mid h) \P(p \neq p' \nmid h).
\]
On the other hand, according to \cref{conj:main},
\[
    \P(h\in\{p^k\}) = \sum_k \lambda_{p^k} \P_0 = \sum_k \lambda_{p^k} \P(p \nmid h) \P(p \neq p' \nmid h).
\]
Thus, 
\begin{equation}\label{eq:sum1}
    \sum_k \lambda_{p^k} = \frac{\P(p \mid h)}{\P(p \nmid h)} = \frac{1 - \P(p \nmid h)}{\P(p \nmid h)}
\end{equation}

Now we write \( \sum_k \lambda_{p^k} \) in another way. Beginning with the equality\footnote{The author was not able to find a reference for this, but it can be proved simply by checking that the ratio of L.H.S and R.H.S tends to 1.}
\[
    \prod_{k\geqslant 1} (1-p^{-k})^{-1} = 1+\frac{1}{p-1}\bigg(1+\frac{1}{p-1}\bigg(1+\frac{1}{p^2-1}\bigg(1+\frac{1}{p^2-1}\bigg(1+\frac{1}{p^3-1}\bigg(1+\frac{1}{p^3-1}\bigg(1+\cdots\bigg),
\]
one has:
\begin{align*}
    &\prod_{k\geqslant 1} (1-p^{-k})^{-1} - \frac{p}{p-1} = \bigg(\frac{1}{p-1}\bigg)^2\bigg(1+\frac{1}{p^2-1}\bigg(1+\frac{1}{p^2-1}\bigg(1+\frac{1}{p^3-1}\bigg(1+\frac{1}{p^3-1}\bigg(1+\cdots\bigg)\\
    \iff &
    \prod_{k\geqslant 2} (1-p^{-k})^{-1} - 1 = \frac{1}{p(p-1)}\bigg(1+\frac{1}{p^2-1}\bigg(1+\frac{1}{p^2-1}\bigg(1+\frac{1}{p^3-1}\bigg(1+\frac{1}{p^3-1}\bigg(1+\cdots\bigg),
\end{align*}
which is precisely
\begin{equation}\label{eq:sum2}
    \frac{1-\prod_{k\geqslant 2} (1-p^{-k})}{\prod_{k\geqslant 2} (1-p^{-k})} = \sum_k \lambda_{p^k}.
\end{equation}
Compare \cref{eq:sum1,eq:sum2}, one gets
\begin{corollary}\label{coro:nmid}
    The conjectural probability that an odd prime number \( p_0 \) does not divide the class number of \( \Q[\sqrt{p}] \) is
    \[
        \P(p_0 \nmid h) = \prod_{k\geqslant 2} (1-p_0^{-k}).
    \]
\end{corollary}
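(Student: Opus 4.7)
The plan is to complete the comparison already set up in the excerpt: both \cref{eq:sum1} and \cref{eq:sum2} provide closed-form expressions for the same sum $\sum_k \lambda_{p_0^k}$, so equating their left-hand sides gives
\[
    \frac{1-\P(p_0 \nmid h)}{\P(p_0\nmid h)} = \frac{1-\prod_{k\geqslant 2}(1-p_0^{-k})}{\prod_{k\geqslant 2}(1-p_0^{-k})}.
\]
From here, one only needs to invert the map $x \mapsto (1-x)/x$ applied to both sides.

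I would justify this inversion by observing that $x \mapsto (1-x)/x = 1/x - 1$ is strictly decreasing on $(0,\infty)$ and in particular injective. Both $\P(p_0 \nmid h)$ and $\prod_{k\geqslant 2}(1-p_0^{-k})$ lie in $(0,1]$: the former as a probability (assumed positive, else the entire conjectural framework collapses), the latter as a convergent infinite product of positive factors less than $1$, with convergence following from $\sum_k p_0^{-k} < \infty$. By injectivity, the two quantities must coincide, giving $\P(p_0 \nmid h) = \prod_{k\geqslant 2}(1-p_0^{-k})$ as claimed.

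The corollary itself is therefore a one-line consequence of the identities already in hand. The real obstacle, and the substantive part of the whole argument, lies upstream in establishing \cref{eq:sum2}: it rests on the nested-fraction expansion of $\prod_{k\geqslant 1}(1-p^{-k})^{-1}$ flagged in the footnote, which must be matched term-by-term against the recursion $\lambda_{p^n} = \lambda_{p^{n-1}}(p^{\lfloor n/2 + 1\rfloor}-1)^{-1}$ from \cref{conj:main}. Verifying the footnoted identity is where I expect the effort to concentrate; once it is accepted, the alignment of nested denominators $(p-1, p-1, p^2-1, p^2-1, p^3-1, p^3-1, \dots)$ with the floor-function indexing of the recursion produces \cref{eq:sum2} on the nose, after which the injectivity argument above closes out the corollary.
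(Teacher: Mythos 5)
Your proposal is correct and follows the paper's own route exactly: the paper's entire argument for \cref{coro:nmid} is the single line ``Compare \cref{eq:sum1,eq:sum2}'', and your explicit appeal to the injectivity of \( x \mapsto (1-x)/x \) on \( (0,\infty) \) just makes that comparison rigorous. Your observation that the real work lies upstream in the footnoted nested-fraction identity behind \cref{eq:sum2} is also accurate, but that is shared infrastructure rather than a different approach.
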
\noindent
And thus finally
\begin{corollary}\label{coro:p0}
    The conjectural probability of \( \Q[\sqrt{p}] \) has class number one is
    \begin{align*}
        \P_0 &= \prod_{\underset{p\text{~prime}}{p\geqslant 3}}\prod_{k\geqslant 2} (1-p^{-k})\\
        &= \prod_{k\geqslant 2} \left((1-2^{-k})\zeta(k)\right)^{-1} \\
        &\approx 0.75446.
    \end{align*}
\end{corollary}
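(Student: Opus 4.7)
The plan is to chain together the formula $\P_0 = \prod_{p\geqslant 3}\P(p\nmid h)$ (already noted at the start of the section), the evaluation of each factor supplied by \cref{coro:nmid}, and an Euler product manipulation to rewrite the answer in terms of $\zeta(k)$.

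First I would substitute \cref{coro:nmid} into the product expression for $\P_0$ to get
\[
    \P_0 = \prod_{\substack{p\geqslant 3 \\ p\text{ prime}}} \prod_{k\geqslant 2} (1-p^{-k}),
\]
which is the first displayed line of the corollary. Next I would justify interchanging the two products (each factor lies in $(0,1)$ and the double sum $\sum_{p\geqslant 3}\sum_{k\geqslant 2}p^{-k}$ converges absolutely, so the double product converges unconditionally) in order to group by the exponent $k$ rather than by the prime $p$.

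Then, for each fixed $k\geqslant 2$, I would invoke the Euler product $\zeta(k)^{-1}=\prod_{p}(1-p^{-k})$ and factor out the prime $2$:
\[
    \prod_{\substack{p\geqslant 3 \\ p\text{ prime}}}(1-p^{-k}) = \frac{\zeta(k)^{-1}}{1-2^{-k}} = \bigl((1-2^{-k})\zeta(k)\bigr)^{-1}.
\]
Taking the product over $k\geqslant 2$ yields the second displayed identity.

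The only remaining task is the numerical approximation $\approx 0.75446$, which I would obtain by truncating the product at a moderate value of $k$ and estimating the tail via $\log\bigl((1-2^{-k})\zeta(k)\bigr)=O(2^{-k}+3^{-k})$, so the tail contribution to $\log\P_0$ is geometrically small. I do not expect any real obstacle: the Euler-product step is standard, and the only subtlety is the interchange of products, which is immediate from absolute convergence.
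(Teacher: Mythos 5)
Your proposal is correct and matches the paper's (very terse) derivation exactly: the paper simply combines $\P_0=\prod_{p\geqslant 3}\P(p\nmid h)$ with \cref{coro:nmid} and leaves the Euler-product rearrangement $\prod_{p\geqslant 3}(1-p^{-k})=\bigl((1-2^{-k})\zeta(k)\bigr)^{-1}$ and the numerical evaluation implicit. Your added justification of the product interchange via absolute convergence of $\sum_{p\geqslant 3}\sum_{k\geqslant 2}p^{-k}$ is a welcome detail the paper omits.
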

\Cref{coro:nmid,coro:p0} coincide with conjectures (C7) and (C11) in \cite{CL83}. Here we arrived at those results from a different starting point.

\section{Status of the conjecture and some remarks for further work}

\subsection{Current status}

\Cref{conj:main} says that over 75\% of real quadratic fields in \( \{ \Q[\sqrt{p}] ~|~ p \text{ primes} \} \) have class number one, thus implies Gauss's class number conjecture for real quadratic fields, which states that there are infinitely many such fields with class number one. We are far from proving this yet. Numerical calculations dating back to late 1980s have showed that the actual ratio tends to \( \P_0 \) very slowly, see \cite[\textsc{Figure~4.1}]{SW88}.

One related known result is that the number of real quadratic fields with absolute discriminant \( \leqslant X \) and class number divisible by \( q \) is \( \gg X^{(1/q)-\varepsilon} \) for all \( \varepsilon>0 \) (Yu, \cite{Yu02}); better estimation for small class number cases can be found in \cite{Byeon03,Byeon06}.

\subsection{On the set \texorpdfstring{\( \mathfrak{A} \)}{A} of all real quadratic fields}

In this subsection, we briefly discuss some observations on the class numbers of members in \( \mathfrak{A} = \{ \Q[\sqrt{d}] ~|~ d > 0 \text{ square free} \} \). Similarly to the previous discussion, define \( \P_i \) to be the probability that a randomly chosen member from \( \mathfrak{A} \) has class number \( i \), and assume that \( \P_i \) exists for all \( i \geqslant 1 \). Then numerical result suggests that:
\begin{conjecture}\label{conj:all-odd}
    For positive odd integers \( q \), \( \P_q / \P_1 = \lambda_q \).
\end{conjecture}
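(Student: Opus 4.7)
The plan is to reduce \cref{conj:all-odd} to \cref{conj:main} together with heuristics of Cohen--Lenstra type governing the interaction of the 2-part and the odd part of the class group. Write \( h = h_2 \cdot h_{\mathrm{odd}} \), where \( h_2 \) denotes the 2-part. Since \( q \) is odd, \( \P_q = \P(h_2 = 1 \wedge h_{\mathrm{odd}} = q) \), so the claim amounts to asserting that, conditional on \( h_2 = 1 \), the odd part \( h_{\mathrm{odd}} \) over \( \mathfrak{A} \) follows the same distribution as \( h \) over \( \mathfrak{P} \).

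First I would use Gauss's genus theory to characterize when \( h(\Q[\sqrt{d}]) \) is odd. The narrow class group has 2-rank exactly \( t - 1 \), where \( t \) is the number of primes dividing the discriminant of \( \Q[\sqrt{d}] \); combining this with the exact sequence \( 1 \to \{\pm 1\}/N(U) \to \mathrm{Cl}^+ \to \mathrm{Cl} \to 1 \) pins down the 2-rank of the ordinary class group in terms of the norm of the fundamental unit. Consequently, \( h_2 = 1 \) forces \( t \in \{1, 2\} \) together with a vanishing condition on the 4-rank and higher 2-parts. Enumerating the squarefree \( d > 0 \) subject to these constraints yields families of the form \( d = p \equiv 1 \pmod 4 \), \( d = 2 \), \( d = p \equiv 3 \pmod 4 \), \( d = 2p \), and so on, each requiring its own additional conditions on higher 2-powers.

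Next I would invoke the Cohen--Lenstra philosophy in two places: (i) the distribution of \( h_{\mathrm{odd}} \) is the same for all real quadratic fields, independently of the form of the discriminant; and (ii) \( h_{\mathrm{odd}} \) is independent of \( h_2 \). Together these give
\[
    \frac{\P_q}{\P_1} = \frac{\P(h_2 = 1)\,\P(h_{\mathrm{odd}} = q)}{\P(h_2 = 1)\,\P(h_{\mathrm{odd}} = 1)} = \frac{\P(h_{\mathrm{odd}} = q)}{\P(h_{\mathrm{odd}} = 1)}.
\]
Since \( h = h_{\mathrm{odd}} \) throughout \( \mathfrak{P} \) (the class numbers there being odd, per \cite[Sec.~3.8]{BS66}), \cref{conj:main} identifies this last ratio with \( \lambda_q \), closing the argument.

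The main obstacle is that both pieces of the Cohen--Lenstra philosophy invoked above are themselves deep open conjectures; unconditional progress is limited to isolated cases (for instance Fouvry--Kl\"uners for the 4-rank of the class group). A secondary, more technical obstacle is checking that the Cohen--Lenstra distribution for the odd part is genuinely stable under restricting from \( \mathfrak{A} \) to the sparse sub-family \( \mathfrak{P} \) of prime radicands, which is itself only conjectural. In practice, \cref{conj:all-odd} should be regarded as a consequence of the full Cohen--Lenstra heuristic combined with \cref{conj:main}, rather than as a statement admitting an independent proof.
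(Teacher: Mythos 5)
The paper offers no proof of \cref{conj:all-odd} at all: it is presented purely as an empirical observation (``numerical result suggests that''), so there is nothing in the source to compare your argument against step by step. Your heuristic reduction is a genuinely different --- and more substantive --- route: you factor \( h = h_2 \cdot h_{\mathrm{odd}} \), use genus theory to control \( h_2 \), and then invoke two Cohen--Lenstra-type assumptions (uniformity of the odd-part distribution across discriminant types, and independence of the odd part from the 2-part) to identify \( \P_q/\P_1 \) over \( \mathfrak{A} \) with \( \P(h=q)/\P(h=1) \) over \( \mathfrak{P} \), i.e.\ with \( \lambda_q \) via \cref{conj:main}. This is coherent as a heuristic, and it is well aligned with the paper's own framework: your independence assumption is essentially what \cref{conj:all-relation} asserts for \( r\geqslant 1 \), and your uniformity assumption is the content of \cref{conj:all-odd} itself once the 2-part is split off. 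That last point is worth stressing: assumption (i) in your argument --- that \( h_{\mathrm{odd}} \) has the same distribution over \( \mathfrak{A} \) as over the sparse subfamily \( \mathfrak{P} \) --- is very nearly a restatement of the conjecture being ``proved,'' so the reduction is closer to a reformulation than to independent evidence. You flag this honestly at the end, which is the right conclusion: neither the paper nor your argument proves anything unconditionally, but your write-up at least makes explicit which standard heuristics the numerical observation is consistent with, which the paper does not do.
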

\begin{conjecture}\label{conj:all-relation}
    For any even number \( n > 0 \), write \( n = 2^r q \), where \( q \) is an odd number. Then \( \P_n = \lambda_q \P_{2^r} \).
\end{conjecture}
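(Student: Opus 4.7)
The plan is to mirror the derivation of part (2) of \cref{conj:main} from part (1) given in the second remark of \cref{sec:conjecture}, this time transferring the argument from \( \mathfrak{P} \) to \( \mathfrak{A} \). The key input is the Cohen--Lenstra style heuristic that, for a field drawn uniformly from \( \mathfrak{A} \), the \( p \)-adic valuations \( v_p(h) \) of the class number at distinct primes \( p \) behave as independent random variables. I would first flag this independence as the sole conjectural ingredient beyond \cref{conj:all-odd} itself.

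Granting the independence, write \( n = 2^r q \) with \( q = q_1^{r_1}\cdots q_t^{r_t} \) odd. Expanding the event \( h = n \) into prime-exact-power conditions and using independence gives
\begin{align*}
    \P_n &= \P(2^r \,\|\, h) \cdot \prod_{m=1}^t \P(q_m^{r_m} \,\|\, h) \cdot \prod_{\substack{p \text{ odd prime} \\ p \nmid q}} \P(p \nmid h), \\
    \P_{2^r} &= \P(2^r \,\|\, h) \cdot \prod_{p \text{ odd prime}} \P(p \nmid h),
\end{align*}
so dividing yields
\[
    \frac{\P_n}{\P_{2^r}} = \prod_{m=1}^t \frac{\P(q_m^{r_m} \,\|\, h)}{\P(q_m \nmid h)}.
\]
Running the identical computation with \( r = 0 \) expresses \( \P_q / \P_1 \) as the same product. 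Hence \( \P_n / \P_{2^r} = \P_q / \P_1 \), which by \cref{conj:all-odd} equals \( \lambda_q \), giving the desired equality \( \P_n = \lambda_q \P_{2^r} \).

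The main obstacle is justifying the independence input for the 2-part versus the odd part. Independence among distinct odd primes is no more problematic here than in the original Cohen--Lenstra setting, but over \( \mathfrak{A} \) the 2-rank of the class group is tightly controlled by genus theory in terms of how primes split in the discriminant; an honest proof would need to show that conditioning on a fixed 2-part of \( h \) leaves the distribution of the odd part unaltered. My proposed approach would be to stratify \( \mathfrak{A} \) by the 2-adic genus data, verify \cref{conj:all-odd} within each stratum, and only then average. Short of that, the argument remains at the same heuristic level as the rest of the paper.
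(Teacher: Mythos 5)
The first thing to say is that the paper does not prove this statement at all: \cref{conj:all-relation} is offered purely as an empirical observation (``numerical result suggests that\dots''), with no derivation, so there is no proof of record to compare yours against. What you have written is likewise not a proof but a heuristic reduction --- and a worthwhile one, since it shows that \cref{conj:all-relation} is not logically independent of \cref{conj:all-odd}: granting that the \( 2 \)-adic valuation \( v_2(h) \) and the odd part of \( h \) are distributed independently over \( \mathfrak{A} \), your factorization of the events \( \{h=n\} \) and \( \{h=2^r\} \) is the exact analogue of the paper's own remark deriving part (2) of \cref{conj:main} from part (1), and the cancellation \( \P_n/\P_{2^r} = \P_q/\P_1 = \lambda_q \) goes through. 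The paper presents the two conjectures as separate observations; your argument ties them together, which is more informative even though it proves nothing unconditionally.

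The one substantive issue is the one you already flag, and it deserves to be stated as sharply as possible: the independence of the \( 2 \)-part from the odd part over \( \mathfrak{A} \) is \emph{not} on the same footing as independence among distinct odd primes. By genus theory the \( 2 \)-rank of the (narrow) class group of \( \Q[\sqrt{d}] \) is determined by the number of prime divisors of the discriminant, so conditioning on \( v_2(h) = r \) biases the sample toward discriminants with a prescribed amount of ramification, and one must argue (or separately conjecture) that the odd part of the class group is equidistributed across these genus strata. Your proposed fix --- stratify \( \mathfrak{A} \) by genus data, establish the analogue of \cref{conj:all-odd} on each stratum, then average --- is the right shape for an honest argument, but note that \cref{conj:all-odd} restricted to a single stratum is a strictly stronger statement than \cref{conj:all-odd} itself, so the reduction trades one unproved conjecture for a stronger one. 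As a heuristic companion to the paper this is fine; just be explicit that the independence hypothesis is an additional conjecture, not a consequence of anything stated in \cref{conj:main} or \cref{conj:all-odd}.
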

With the conjectures above, one can deduce that:
\begin{corollary}
    \( \P_0 = \sum_{k=0}^{\infty} \P_{2^k}. \)
\end{corollary}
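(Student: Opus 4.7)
The plan is to start from the identity $\sum_{n=1}^\infty \P_n = 1$, split the sum according to the 2-adic valuation of $n$, and use the conjectures of this subsection together with the results of Section 2 to collapse it.

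First I would write every positive integer uniquely as $n = 2^r q$ with $r \geqslant 0$ and $q$ odd. By \Cref{conj:all-odd} (applied in the case $r=0$, noting $\P_{2^0}=\P_1$) and \Cref{conj:all-relation}, one has the uniform formula $\P_n = \lambda_q \P_{2^r}$ for every $n \geqslant 1$. Assuming countable additivity of the densities (i.e., that $\sum_{n\geqslant 1}\P_n = 1$), summing gives
\[
    1 \;=\; \sum_{r=0}^{\infty}\sum_{q\text{ odd}} \lambda_q \P_{2^r} \;=\; \biggl(\sum_{q\text{ odd}} \lambda_q\biggr)\biggl(\sum_{r=0}^{\infty} \P_{2^r}\biggr),
\]
provided the double series converges absolutely so that Fubini applies. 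Hence the result reduces to the identity
\[
    \sum_{q\text{ odd},\,q\geqslant 1} \lambda_q \;=\; \frac{1}{\P_0}.
\]

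To establish this, I would use part (2) of \Cref{conj:main} (multiplicativity of $\lambda$ with $\lambda_1 = 1$) to factor the sum as an Euler-type product over odd primes:
\[
    \sum_{q\text{ odd}} \lambda_q \;=\; \prod_{p\geqslant 3} \biggl(1 + \sum_{k\geqslant 1} \lambda_{p^k}\biggr).
\]
By \cref{eq:sum1}, the inner factor at $p$ equals $1 + \frac{1-\P(p\nmid h)}{\P(p\nmid h)} = \P(p\nmid h)^{-1}$, and by \Cref{coro:nmid} this is $\prod_{k\geqslant 2}(1-p^{-k})^{-1}$. Multiplying over odd primes and comparing with the formula for $\P_0$ in \Cref{coro:p0} gives exactly $1/\P_0$, so the desired identity follows and we conclude $\P_0 = \sum_{r\geqslant 0} \P_{2^r}$.

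The main obstacle is not the algebraic manipulation itself but the analytic justification: we need both the convergence of the double sum $\sum_{r,q}\lambda_q\P_{2^r}$ and the countable additivity $\sum_{n\geqslant 1}\P_n = 1$ of the natural densities $\P_n$. The former follows from the rapid decay of $\lambda_{p^k}$ (each factor in the Euler product is finite) and the trivial bound $\P_{2^r}\leqslant 1$; the latter is a genuine assumption on top of the existence hypothesis stated for $\mathfrak A$ and would have to be included (or argued from a tightness-type statement about the distribution of class numbers in $\mathfrak A$) to make the deduction fully rigorous.
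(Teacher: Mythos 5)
Your proposal is correct and follows essentially the same route as the paper: decompose each $n$ as $2^r q$ with $q$ odd, use \Cref{conj:all-odd,conj:all-relation} to factor the total mass as $\bigl(\sum_{q\text{ odd}}\lambda_q\bigr)\bigl(\sum_r \P_{2^r}\bigr)$, and identify $\sum_{q\text{ odd}}\lambda_q = 1/\P_0$. The only (harmless) differences are that the paper subtracts the powers of two first and cites \Cref{conj:main} directly for $\sum_{q\geqslant 3\text{ odd}}\lambda_q = 1/\P_0 - 1$, whereas you derive the same identity via the Euler product through \cref{eq:sum1} and \Cref{coro:nmid,coro:p0}, and you are more explicit about the countable-additivity assumption $\sum_n \P_n = 1$, which the paper also uses tacitly.
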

\begin{proof}[``Proof'']
    \leavevmode\vspace{-1.4\baselineskip}
    \begin{align*}
        \hspace{3.5cm}&1-(\P_1+\P_2+\P_4+\P_8+\P_{16}+\cdots)\\
        =~& (\lambda_3+\lambda_5+\lambda_7+\lambda_9+\cdots)(\P_1+\P_2+\P_4+\P_8+\P_{16}+\cdots)
        &(\text{by \cref{conj:all-odd} and \ref{conj:all-relation}})\\
        =~& (1/\P_0 - 1)(\P_1+\P_2+\P_4+\P_8+\P_{16}+\cdots)
        &(\text{by \cref{conj:main}})
    \end{align*}
    Thus \( (1/\P_0)(\P_1+\P_2+\P_4+\P_8+\P_{16}+\cdots)=1 \), hence \( \P_0 = \sum_{k=0}^{\infty} \P_{2^k} \).
\end{proof}

Despite the (conjectural) summation formula above, the relationship among \( \P_1 \), \( \P_2 \), \( \P_4 \), \( \P_8 \), etc. is currently unknown. However, from the numerical calculation, it seems that \( \P_2 > \P_1 \), which suggests that properties of the even part of class groups might be different from those of the odd part.

\subsection{A heuristic assertion}

According to \cref{conj:main}, for any odd integers \( q_1,q_2 \) with \( q_1 \mid q_2 \), the ratio \( \lambda_{p_1}/\lambda_{p_2} \) is an integer. This suggests that the set \( \mathfrak{P} = \{ \Q[\sqrt{p}] ~|~ p \text{ primes} \} \) may have some natural hierarchical structures.

\subsubsection{An illustration}
Consider \( \lambda_{1}/\lambda_{3} = 6 \), it is reasonable to think of this as if for any member in \( \mathfrak{P} \) with class number three, there are six members of class number one ``growing'' out of it, as illustrated below:
\begin{center}
    \begin{tikzpicture}
        \foreach \i in {1,...,6}{
            \node at (\i,0) {\fbox{\( K_{1_{\i}} \)}};
            \draw[->] (3.5,-0.72) -- (\i,-0.32);
        }
        \node at (3.5,-1) {\fbox{\( K_3 \)}};
        \node at (5.5,-0.7) {\footnotesize ``decomposition''};
    \end{tikzpicture}
\end{center}
Here \( K_3 \) represents some real quadratic field in \( \mathfrak{P} \) with class number three, and \( K_{1_{i}} \) represents those with class number one that is related to \( K_3 \) in this hypothetical hierarchical structure.

If one regards class number as a measurement of the distance from the ring of integers in this number field to an UFD, then one may interpret the meaning of this hierarchical structure as: the obstacles for the integer factorization in \( K_3 \) are contributed by those \( K_{1_{i}} \) (which are all closer to an UFD) related to it. More specifically, this means that the fundamental information about factorization contained in the inner structures of \( K_{1_{i}} \) are somehow kept and embedded in that of \( K_3 \). Thus, for example, one may expect the classical notion of \emph{ideals} to be part of this new hypothetical structure.

The example above demonstrates only a small piece in the hierarchical structure. The global picture would be like patching together infinitely many such pieces with 1 and 3 replaced by \( q_1 \mid q_2 \). In this process, as composite numbers such as 27 (of form \( p^k \)), 35 (of form \( pq \)) and the others coming into the picture, an intriguing global structure emerges, for which \cref{conj:main} only describes the general graph-theoretic information. 

\subsubsection{The ``eigenvalue''}
On the other hand, thinking along this direction, the equality, or at least some quantities in the equality:
\begin{align*}
    &\bigg( 1+\frac{1}{p-1} \bigg)
    \bigg( 1+\frac{1}{p^2-1} \bigg)
    \bigg( 1+\frac{1}{p^3-1} \bigg)
    \cdots\\
    =\quad&
    1+\frac{1}{p-1}\bigg(1+\frac{1}{p-1}\bigg(1+\frac{1}{p^2-1}\bigg(1+\frac{1}{p^2-1}\bigg(1+\frac{1}{p^3-1}\bigg(1+\frac{1}{p^3-1}\bigg(1+\cdots\bigg)
\end{align*}
may be seen as the ``signature'' or ``eigenvalue'' of this particular hierarchical structure corresponding to \( \mathfrak{P} \), encoding its degree information. Previously we have only considered the set \( \mathfrak{P} \), but one can reasonably speculate that this kind of hierarchical structure exists in many other context, and each corresponds to a beautiful equality of its own. This suggests, from another point of view, that the study of it might be fruitful.

\subsubsection{On the construction}
To really construct such a structure, \cite[(C11)]{CL83} suggests that the order at each vertex may have something to do with the order of the corresponding automorphism groups, which offers a clue to get started. Currently we do not know much about the arrangement of the vertices or when shall two of them connected, and the above heuristic assertion on the hierarchical structure may still be a little over-simplified. However, once this hierarchical structure is successfully constructed and its properties established, \cref{conj:main} follows immediately (although the proof itself would appear to be non-constructive).

It should be pointed out, however, that the set of complex quadratic fields does not seem to share a similar hierarchical structure, since the class number \( h\to\infty \) as the discriminant tends to infinity. The author believes that it is due to the presence of \( \sqrt{-1} \) that somehow causes the corresponding hierarchical structure ``collapse''.

\subsection{Final remarks}

All these conjectures are obtained by merely observing several small class number tables of size \( \sim 10^6 \). Thus it is possible that the conjectural value \( \lambda_{p^k} \) for large \( k \) is wrong (the exponent \( \lfloor n/2+1 \rfloor \) seems to be somewhat mysterious). However, the author believes that even if there are some flaws in the conjectures, the heuristic assertion above should still make some sense.


\begin{thebibliography}{}
    \bibitem{BS66} \textsc{Z. I. Borevich \& I. R. Shafarevich}, \emph{Number Theory}, Pure and Applied Mathematics, vol. 20, Academic Press Inc, 1966.
    \bibitem{Byeon03} \textsc{D. Byeon \& E. Koh}, \emph{Real quadratic fields with class number divisible by 3}, Manuscripta Mathematica 111, 2003, pp. 261–263.
    \bibitem{Byeon06} \textsc{D. Byeon}, \emph{Real quadratic fields with class number divisible by 5 or 7}, Manuscripta Mathematica 120, 2006, pp. 211-215.
    \bibitem{CL83} \textsc{H. Cohen \& H. W. Lenstra, Jr.}, "Heuristics on class groups of number fields", \emph{Number Theory} (Noordwijkerhout, 1983), Lecture Notes in Mathematics, vol. 1068, Springer-Verlag, Berlin and New York, 1984, pp. 33-62.
    \bibitem{SW88} \textsc{A. J. Stephens \& H. C. Williams }, \emph{Computation of real quadratic fields with class number one}. Math. Comp. 51, 1988, pp. 809-824.
    \bibitem{Yu02} \textsc{G. Yu}, \emph{A note on the divisibility of class numbers of real quadratic fields}. J. Number Theory 97, 2002, pp. 35–44.
\end{thebibliography}
\end{document}